\let\@@citation@@=\citation
\renewcommand{\citation}[1]{\@@citation@@{#1}%
\@for\@tempa:=#1\do{\@ifundefined{cit@\@tempa}%
  {\global\@namedef{cit@\@tempa}{}}{}}%
}
\def\@lbibitem[#1]#2#3\par{%
  \@ifundefined{cit@#2}{}{\@skiphyperreftrue
  \H@item[%
    \ifx\Hy@raisedlink\@empty
      \hyper@anchorstart{cite.#2\@extra@b@citeb}%
        \@BIBLABEL{#1}%
      \hyper@anchorend
    \else
      \Hy@raisedlink{%
        \hyper@anchorstart{cite.#2\@extra@b@citeb}\hyper@anchorend
      }%
      \@BIBLABEL{#1}%
    \fi
    \hfill
  ]%
  \@skiphyperreffalse}%
  \if@filesw
    \begingroup
      \let\protect\noexpand
      \immediate\write\@auxout{%
        \string\bibcite{#2}{#1}%
      }%
    \endgroup
  \fi
  \ignorespaces
  \@ifundefined{cit@#2}{}{#3}}
\def\@bibitem#1#2\par{%
  \@ifundefined{cit@#1}{}{\@skiphyperreftrue\H@item\@skiphyperreffalse
  \Hy@raisedlink{%
    \hyper@anchorstart{cite.#1\@extra@b@citeb}\relax\hyper@anchorend
    }}%
  \if@filesw
    \begingroup
      \let\protect\noexpand
      \immediate\write\@auxout{%
        \string\bibcite{#1}{\the\value{\@listctr}}%
      }%
    \endgroup
  \fi
  \ignorespaces
  \@ifundefined{cit@#1}{}{#2}}
\newtheorem*{thm*}{Theorem}%[section]
\def\B{\mathcal B}
\DeclareMathSymbol{\lsb@l}{\mathalpha}{letters}{`l}
\begin{document}

\title{Exponential lower bound for Berge-Ramsey problems}
\author{D\"om\"ot\"or P\'alv\"olgyi
\footnote{MTA-ELTE Lend\"ulet Combinatorial Geometry Research Group, Institute of Mathematics, E\"otv\"os Lor\'and University (ELTE), Budapest, Hungary}
%Research supported by the Lend\"ulet program of the Hungarian Academy of Sciences (MTA), under grant number LP2017-19/2017
%Biro Petis: XY was supported by the Hungarian Academy of Sciences under the research grant no. KEP-6/2017
%Hungarian National Science Fund (OTKA), under grant PD 104386 and NN 102029 (EUROGIGA project GraDR 10-EuroGIGA-OP-003), the J\'anos Bolyai Research Scholarship of the Hungarian Academy of Sciences, and by the Marie Sk\l odowska-Curie action of the European Commission, under grant IF 660400.
}
%\date{}
\maketitle

%\begin{abstract}
%\end{abstract}

%\medskip

Gerbner and Palmer \cite{GP}, generalizing the definition of hypergraph cycles due to Berge, introduced the following notion.
A hypergraph $H$ contains a \emph{Berge copy} of a graph $G$, if there are injections $\Psi_1: V(G)\to V(H)$ and $\Psi_2: E(G) \to E(H)$ such that for every edge $uv\in E(G)$ the containment $\Psi_1(u),\Psi_1(v)\in \Psi_2(uv)$ holds, i.e., each graph edge can be mapped into a distinct hyperedge containing it to create a copy of $G$.
If $|E(H)|=|E(G)|$, then we say that $H$ is a \emph{Berge-$G$}, and we denote such hypergraphs by $\B G$.

The study of Ramsey problems for such hypergraphs started independently in 2018 by three groups of authors \cite{AG,GMOV,STWZ}.
Denote by $R_r(\B G; c)$ the size of the smallest $N$ such that no matter how we $c$-color the $r$-edges of $K_N^r$, the complete $r$-uniform hypergraph, we can always find a monochromatic $\B G$.
In \cite{AG} $R_r(\B K_n; c)$ was studied for $n=3,4$.
In \cite{GMOV} it was conjectured that $R_r(\B K_n; c)$ is bounded by a polynomial of $n$ (depending on $r$ and $c$), and they showed that $R_r(\B K_n; c)=n$ if $r>2c$ and $R_r(\B K_n; c)=n+1$ if $r=2c$, while $R_3(\B K_n; 2)< 2n$ (also proved in \cite{STWZ}).
In \cite{STWZ} a superlinear lower bound was shown for $r=c=3$ and for every other $r$ for large enough $c$.
This was improved in \cite{G} to $R_{r}(\B K_n; c)=\Omega(n^{d})$ if $c>(d-1)\binom r2$ and $R_r(\B K_n; c)=\Omega(n^{1+1/(r-2)}/\log n)$.
We further improve these to disprove the conjecture of \cite{GMOV}. % as follows.

\begin{thm*}
	$R_{r}(\B K_n; c)> (1+\frac 1{r^2})^{n-1}$ if $c>\binom r2$.
\end{thm*}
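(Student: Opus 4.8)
The plan is to reduce the whole problem to ordinary edge colourings of complete graphs, using that an $r$-element set contains only $\binom r2$ pairs while we are allowed $c>\binom r2$ colours.

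\emph{Step 1 (reduction).} Let $\chi$ be any colouring of the edges of the complete graph $K_N$ on $[N]$ with the $c$ colours. For an $r$-set $e$ define $\phi(e)$ to be any colour not used by $\chi$ on the $\binom r2$ pairs inside $e$; such a colour exists precisely because $c>\binom r2$. I claim that if every colour class of $\chi$ has independence number at most $n-1$, then the $c$-colouring $\phi$ of $E(K_N^r)$ contains no monochromatic $\B K_n$. Indeed, a monochromatic $\B K_n$ in colour $k$ would supply distinct vertices $v_1,\dots,v_n$ and hyperedges $e_{ij}$ with $v_i,v_j\in e_{ij}$ and $\phi(e_{ij})=k$; since $\{v_i,v_j\}$ is one of the pairs inside $e_{ij}$, the definition of $\phi$ forces $\chi(\{v_i,v_j\})\ne k$ for all $i\ne j$, so $\{v_1,\dots,v_n\}$ would be an independent set of size $n$ in the colour-$k$ graph of $\chi$, a contradiction. (The argument does not even use that the $e_{ij}$ are distinct.) Hence it suffices to produce, for every integer $N\le(1+\tfrac1{r^2})^{n-1}$, an edge colouring of $K_N$ with $\binom r2+1\le c$ colours in which every colour class has independence number at most $n-1$; the unused colours cost nothing.

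\emph{Step 2 (the edge colouring).} Colour $E(K_N)$ independently and uniformly at random with $c_0:=\binom r2+1$ colours. For a fixed colour and a fixed $n$-set the probability of spanning no edge of that colour is $(1-1/c_0)^{\binom n2}$, so the union bound gives failure probability at most $c_0\binom Nn(1-1/c_0)^{\binom n2}<c_0N^n e^{-\binom n2/c_0}$, which is $<1$ once $\log N<\tfrac{n-1}{2c_0}-\tfrac{\log c_0}{n}$. Now $2c_0=r^2-r+2\le r^2$ and $\log(1+\tfrac1{r^2})<\tfrac1{r^2}$, so $\log\!\big(1+\tfrac1{r^2}\big)<\tfrac1{2c_0}$ with a fixed positive gap; hence for $N\le(1+\tfrac1{r^2})^{n-1}$ the required inequality holds for all $n$ larger than some threshold $n_0(r)=O(r^{3/2}\sqrt{\log r})$, and a good edge colouring — hence a good $\phi$ — exists. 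For the remaining (small) $n$ one has $(1+\tfrac1{r^2})^{n-1}<r$: this holds whenever $n-1<\log r/\log(1+\tfrac1{r^2})$, and $\log r/\log(1+\tfrac1{r^2})>r^2\log r$ comfortably exceeds $n_0(r)$. For such $n$ the trivial bound $R_r(\B K_n;c)\ge r$ — valid since the hypergraph $K_{r-1}^r$ has no edges at all — already gives $R_r(\B K_n;c)>(1+\tfrac1{r^2})^{n-1}$; and for $r=2$ the statement is in any case contained in the classical Ramsey bound. Combining the two ranges proves the theorem.

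The conceptual crux is the reduction in Step 1 --- spending the ``extra'' colour so that a monochromatic Berge copy is forced to carry an independent set in an auxiliary edge colouring. After that, the only point needing genuine care is matching the constant: everything works because $\log(1+\tfrac1{r^2})<\tfrac1{r^2-r+2}=\tfrac1{2(\binom r2+1)}$, which leaves exactly enough slack to absorb the lower-order $\tfrac{\log c_0}{n}$ term, together with the routine bookkeeping needed to cover the small values of $n$.
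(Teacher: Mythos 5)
Your proof is correct and is essentially the paper's own argument: your auxiliary colouring $\chi$ is exactly the paper's random assignment of a ``forbidden colour'' to each pair, the rule defining $\phi$ is the same use of the extra colour, and both proofs finish with the same first-moment bound $c\binom Nn(1-\tfrac1c)^{\binom n2}<1$. The only difference is bookkeeping: the paper absorbs the prefactor via $c\le n\le n!$ (after discarding the trivially small $n<c$), which avoids your separate asymptotic treatment of the range $n\le n_0(r)$.
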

\begin{proof}
	It is enough to prove the statement for $c=\binom r2+1$.
	For $r=2$ this reduces to the classical Ramsey's theorem, so we can assume $r\ge 3$.
	We can also suppose $n\ge \binom r2+1=c$, or the lower bound becomes trivial.
	Suppose $N\le (1+\frac 1{r^2})^{n-1}$.
	Assign randomly (uniformly and independently) a forbidden color to every pair of vertices in $K_N^r$.
	Color the $r$-edges of $K_N^r$ arbitrarily, respecting the following rule:
	if $\{u,v\}\subset E$, then the color of $E$ cannot be the forbidden color of $\{u,v\}$.
	Since $c>\binom r2$, this leaves at least one choice for each edge.
	Following the classic proof of the lower bound of the Ramsey's theorem, now we calculate the probability of having a monochromatic $\B K_n$.
	The chance of a monochromatic $\B K_n$ on a fixed set of $n$ vertices for a fixed color is at most $(\frac {c-1}c)^{\binom n2}$, as the fixed color cannot be the forbidden one on any of the pairs of vertices.
	Thus the expected number of monochromatic $\B K_n$'s is at most
	$c\binom Nn(\frac {c-1}c)^{\binom n2}$.
	If this quantity is less than $1$, then we know that a suitable coloring exists.
	Since $c\le n\le n!$, it is enough to show that $N< (\frac c{c-1})^{\frac {n-1}2}$, but this is true using $c=\binom r2+1$ and $r\ge 3$. 
\end{proof}

\subsection*{Remarks and acknowledgment}
As was brought to my attention by an anonymous referee, my construction for $r=3$ and $c=4$ is essentially the same as the one used in the proof of Theorem 1(ii) in \cite{CFR} for a different problem, the $4$-color Ramsey number of the so-called \emph{hedgehog}.
A hedgehog with body of order $n$ is a $3$-uniform hypergraph on $n+\binom n2$ vertices such that $n$ vertices form its body, and any pair of vertices from its body are contained in exactly one hyperedge, whose third vertex is one of the other $\binom n2$ vertices, a different one for each hypderedge.
It is easy to see that such a hypergraph is a Berge copy of $K_n$, and while their result, an exponential lower bound for the $4$-color Ramsey number of the hedgehog, does not directly imply mine, their construction is such that it also avoids a monochromatic $\B K_n$.

It is an interesting problem to determine how $R_{r}(\B K_n; c)$ behaves if $c\le \binom r2$.
The first open case is $r=c=3$, just like for hedgehogs.

\end{document}